\DeclareMathAlphabet{\mathcal}{OMS}{cmsy}{m}{n}
\newcommand{\N}{\mathbb{N}}
\newcommand{\R}{\mathbb{R}}
\newcommand{\RT}{\mathcal{R\!T}}
\newcommand{\U}{\mathcal{U}}
\newcommand{\V}{\mathcal{V}}
\newcommand{\PS}{\mathcal{P}}
\newcommand{\QS}{\mathcal{Q}}
\newcommand{\vx}{\boldsymbol{x}}
\newcommand{\vn}{\boldsymbol{n}}
\DeclareMathOperator{\dom}{dom}
\DeclareMathOperator{\ran}{ran}
\DeclareMathOperator{\dive}{div}
\DeclareMathOperator{\grad}{grad}
\renewcommand{\ker}{\operatorname{ker}}
\newcommand{\gradp}{\operatorname{{\grad}_\#}}
\newcommand{\divep}{\operatorname{{\dive}_\#}}
\newcommand{\e}{\mathrm{e}}
\newcommand{\tmi}{t_{m,i}}
\newcommand{\Qmr}[1]{Q_m\left[#1\right]_\rho}
\newcommand{\jump}[1]{[\hspace*{-2pt}[#1]\hspace*{-2pt}]}
\newcommand{\scp}[1]{\langle #1 \rangle}
\newcommand{\scprm}[1]{\langle #1 \rangle_{\rho,m}}
\begin{document}

\title*{Homogenisation of parabolic/hyperbolic media}
\author{Sebastian Franz \and Marcus Waurick}
\institute{Marcus Waurick \at University of Strathclyde, \email{marcus.waurick@strath.ac.uk}
\and Sebastian Franz \at Institute of Scientific Computing, TU Dresden \email{sebastian.franz@tu-dresden.de}}
%
%
\maketitle

\abstract{We consider an evolutionary problem with rapidly oscillating
coefficients. This causes the problem to change frequently between a parabolic
and an hyperbolic state. We prove convergence of the homogenisation process
in the unit square and present a numerical method to deal with approximations
of the resulting equations. A numerical study finalises the contribution.}

\keywords{evolutionary equations, fluid-structure model, homogenisation, numerical approximation}
\smallskip

\noindent
\textbf{MSC (2010):} {35M10, 35B35, 35B27, 65M12, 65M60}

\section{Introduction}\label{sec:intro}

In the present article, we discuss an academic example of a partial differential equation with highly oscillatory change of type. In real-world applications this change of type can be observed, when discussing a solid-fluid interaction model. In these kind of models, the solid is modelled by a (hyperbolic) elasticity equation and the fluid is of parabolic type. 

An example of the equations to be studied is the following system of equations in the unit square $\Omega=(0,1)^2$
\begin{align*}
     \partial_t u - \Delta u &= f_\textnormal{w} \text{ on }\Omega_\textnormal{w}, \\
     \partial_t^2 u -\Delta u & =f_\textnormal{b} \text{ on }\Omega_\textnormal{b}.
\end{align*}
$\Omega$ should be thought of being a chessboard like structure with $\Omega_\textnormal{w}$ being the white areas and $\Omega_\textnormal{b}$ being the black areas. $u$ satisfies natural transmission conditions on the interfaces. Our aim is to study the limit of the white and black squares' diameters tending to zero. 

We shall present a convergence estimate for this homogenisation problem as well as a numerical study. Equations with change of type (ranging from elliptic to parabolic to hyperbolic) can be treated with the notion of so-called evolutionary equations, which are due to Picard \cite{Picard2009}, see also \cite{Picard2011}. The notion of evolutionary equations is an abstract class of equations formulated in a Hilbert space setting and comprises partial differential-algebraic problems and may further be described as implicit evolution equation, hence the name `evolutionary equations'. 

More precisely, given a Hilbert space $H$ and bounded linear operator $M_0,M_1\in L(H)$ as well as a skew-self-adjoint operator $A$ in $H$, we consider the problem of finding $U\colon \mathbb{R}\to H$ for some given right-hand side $F\colon \R\to H$ such that
\begin{gather}\label{eq:problem}
  (\partial_t M_0+M_1+A)U=F,
\end{gather}
where $\partial_t$ denotes the time derivative. The solution theory for this equation is set up in an exponentially weighted Hilbert space describing space-time. We shall specify the ingredients in the next section. 

For evolutionary equations, a numerical framework has been developed in \cite{FrTW16}. In particular, this numerical treatment allows for equations with change of type. 

Qualitatively, problems with highly oscillatory change of type (varying in between elliptic/parabolic/hyperbolic) have been considered in \cite{W16_SH} in a one-dimensional setting. For a higher dimensional setting of highly-oscillatory type in the context of Maxwell's equations, we refer to \cite{Waurick2018}. For a solid-fluid interaction homogenisation problem with oscillations between hyperbolic and parabolic parts we refer to \cite{Dasser1995}. 

A quantitative result for equations with change of type has been obtained in \cite{Cherednichenko2018,FrW17}. In the latter reference, we have employed results and techniques stemming from \cite{CW17} to transfer operator-norm estimates on (static) problems posed on $\mathbb{R}^n$ to corresponding estimates for periodic time-dependent problems on the one-dimensional unit cell.

The present contribution is very much in line with the approach presented in \cite{FrW17}. The major difference, however, is the transference to a higher-dimensional setting.

For the sake of the argument, we restrict ourselves to two spatial dimensions. The higher-dimensional case is then adopted without further difficulties. 

We shortly comment on the organisation of this paper. We start by presenting the analytical background in the next section. In this section, we shall also derive the necessary convergence estimates for the homogenisation problem.

Our numerical approach will be provided in Section \ref{sec:numerics}. We conclude the article with a small case study.

\section{Analytical background}\label{sec:analysis}

In this section, we rephrase and summarise some results from \cite{CW17}. The key ingredients are \cite[Theorem 3.9]{CW17} as well as \cite[Proposition 3.16]{CW17}. 

First of all, we properly define the operators involved. Let $\Omega=(0,1)^2$. Then we define
\[
   \tilde{\grad} \colon C_\#^1(\Omega)\subseteq L^2(\Omega)\to L^2(\Omega)^2, \phi\mapsto (\partial_j\phi)_{j\in\{1,2\}},
\]
where $C_\#^1(\Omega)\coloneqq \{\phi|_\Omega; \phi\in C^1(\mathbb{R}^2), \phi(\cdot)=\phi(\cdot+k)\quad (k\in\mathbb{Z}^2)\}$. Note that $\tilde{\grad}$ is densely defined as $C_c^1(\Omega)\subseteq C_\#^1(\Omega)$. We define $\dive_\#\coloneqq -\tilde{\grad}^*$. It is easy to see, that $C^1_\#(\Omega)^2\subseteq \dom(\dive_\#)$ and so $\grad_\#\coloneqq -\dive_\#^*$ is a well-defined operator extending $\tilde{\grad}$. Note that it can be shown that \[\dom(\grad_\#)=H^1_\#(\Omega)\coloneqq \{\phi|_\Omega; \phi\in H^1_{\textnormal{loc}}(\mathbb{R}^2), \phi(\cdot)=\phi(\cdot+k)\quad (k\in\mathbb{Z}^2)\}.\] 

Next, let $s_0,s_1\colon \mathbb{R}^2\to \mathbb{C}$ be measurable, bounded, $(0,1)^2$-periodic functions satisfying $s_0(x)=s_0(x)^*\geq 0$ for all $x\in \mathbb{R}^2$ and
\[
   \rho_0 s_0(x)+\Re s_1(x)\geq c
\]
for some $\rho_0\geq 0$ and $c>0$ and all $x\in \mathbb{R}^2$.

We define $M_0 \in L(L^2(\Omega)^3)$ by
\[
   M_0 (\phi)_{j\in\{1,2,3\}} \coloneqq \begin{pmatrix} s_0 & 0 \\ 0 & 1 
   \end{pmatrix}\begin{pmatrix} \phi_1 \\ (\phi_j)_{j\in\{2,3\}}\end{pmatrix} \coloneqq \begin{pmatrix} (\Omega\ni x\mapsto s_0(x)\phi_1(x)) \\ (\phi_j)_{j\in\{2,3\}}\end{pmatrix},
\]
and $M_1$ similarly replacing $s_0$ by $s_1$. Note that we have 
\[
   \rho_0 M_0 + \Re M_1 \geq c
\]
in the sense of positive definiteness; furthermore $M_0$ is selfadjoint. A straight forward application of \cite[Solution Theory]{Picard2009} leads to the following result. We recall that $\partial_t$ is the distributional derivative with respect to the first variable in the space
\[
  L^2_{\rho}(H)\coloneqq \{ f\in L^2_{\textnormal{loc}}(\R;H); \int_\R \|f(t)\|^2_H \exp(-2\rho t)dt<\infty \}
\]
with maximal domain $H^1_\rho(H)$, that is,
\[
   \partial_t \colon H^1_\rho(H) \subseteq L^2_\rho(H)\to L^2_\rho(H), \phi\mapsto \phi'.
\]
It will be obvious from the context, which $\rho$ and which Hilbert space $H$ is chosen. In the next theorem, we have $H=L^2(\Omega)^3$. 
\begin{theorem}[\cite{Picard2009}]\label{thm:st} Let $\rho\geq\rho_0$. Then
\[
   \mathcal{S}\coloneqq   \overline{    \partial_t M_0 + M_1 + \begin{pmatrix} 0 & \dive_\# \\ \grad_\# & 0 \end{pmatrix}}^{-1} \in L(L^2_{\rho}(L^2(\Omega)^3))
\]
and $\|\mathcal{S}\|\leq 1/c$.
\end{theorem}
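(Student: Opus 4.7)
The plan is to apply Picard's abstract solution theorem for evolutionary equations \cite{Picard2009} to the concrete triple $(M_0, M_1, A)$, where
\[
A \coloneqq \begin{pmatrix} 0 & \dive_\# \\ \grad_\# & 0 \end{pmatrix}
\]
is regarded as a densely defined operator on $L^2(\Omega)^3 = L^2(\Omega) \oplus L^2(\Omega)^2$. Picard's theorem guarantees that, provided $M_0 \in L(H)$ is selfadjoint, $M_1 \in L(H)$, $A$ is skew-selfadjoint, and $\rho_0 M_0 + \Re M_1 \geq c > 0$, the closure $\overline{\partial_t M_0 + M_1 + A}$ in $L^2_\rho(H)$ is continuously invertible for every $\rho \geq \rho_0$, with inverse norm at most $1/c$. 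It therefore suffices to verify these three structural hypotheses in the present setting.

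Selfadjointness of $M_0$ is immediate from $s_0(x) = s_0(x)^*$ together with the identity block acting on the remaining two components. Positive definiteness for general $\rho \geq \rho_0$ reduces to the hypothesis $\rho_0 M_0 + \Re M_1 \geq c$ already recorded in the excerpt: since $M_0 \geq 0$ one has $\rho M_0 + \Re M_1 \geq \rho_0 M_0 + \Re M_1 \geq c$. The central point is the skew-selfadjointness of $A$. By construction $\dive_\# = -\tilde{\grad}^*$ is closed, and the inclusion $C_\#^1(\Omega)^2 \subseteq \dom(\dive_\#)$ noted above ensures that $\dive_\#$ is densely defined; hence $\grad_\# = -\dive_\#^*$ is well defined and closed, and closedness of $\dive_\#$ gives $(\dive_\#^*)^* = \dive_\#$, whence $\grad_\#^* = -\dive_\#$. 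The standard rule for adjoints of an off-diagonal block operator then yields
\[
A^* = \begin{pmatrix} 0 & \grad_\#^* \\ \dive_\#^* & 0 \end{pmatrix} = \begin{pmatrix} 0 & -\dive_\# \\ -\grad_\# & 0 \end{pmatrix} = -A,
\]
with matching domains $\dom(\grad_\#) \oplus \dom(\dive_\#)$, so $A$ is skew-selfadjoint on $L^2(\Omega)^3$.

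With all three hypotheses verified, Picard's theorem applies verbatim and produces $\mathcal{S}$ together with the bound $\|\mathcal{S}\| \leq 1/c$. I expect the only mildly delicate ingredient to be the density of $\dom(\dive_\#)$ underlying the definition of $\grad_\#$; this is already secured by the periodic test-function class $C^1_\#(\Omega)$. Nothing else in the argument exploits the two-dimensional or periodic structure of $\Omega$ beyond these abstract properties, so both the solvability statement and the norm estimate are inherited directly from the abstract theorem.
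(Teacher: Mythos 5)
Your proposal is correct and follows essentially the same route as the paper, which likewise obtains the theorem as a direct application of Picard's solution theory after noting that $M_0$ is selfadjoint, that $\rho_0 M_0 + \Re M_1 \geq c$, and that the spatial operator is skew-selfadjoint by the construction $\dive_\# = -\tilde{\grad}^*$, $\grad_\# = -\dive_\#^*$. Your explicit verification of $A^* = -A$ via the off-diagonal block adjoint is exactly the detail the paper leaves implicit.
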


\begin{remark}\label{rem:st} Note that it can be shown (\cite[Remark 2.3]{W16_SH}) that if $F\in \dom(\partial_t)$, then $\mathcal{S}F\in \dom(\partial_t)\cap \dom(\begin{pmatrix} 0 & \dive_\# \\ \grad_\# & 0 \end{pmatrix})$. Moreover, let $F=(f,0)\in L^2_{\rho}(L^2(\Omega)\oplus L^2(\Omega)^2)$. Then $U=(u,v)=\mathcal{S}F$ satisfies the following two equations
\begin{align*}
 \partial_t s_0 u + s_1 u + \dive_\# v &=f \\
 \partial_t v &= - \grad_\# u.
\end{align*}
Substituting the second equation into the first one, we obtain
\[
    \partial_t^2 s_0 u + \partial_t s_1 u + \dive_\#\grad_\# u = \partial_t f,
\]
which is a damped wave equation, if $s_0>0$ everywhere. The conditions imposed on $s_0 $ and $s_1$, however, also allow for regions, where $s_0 =0$ (or $s_0=0$ entirely). On these regions, the equation is a heat-type equation. If there are regions where either $s_0$ or $s_1$ vanish (but not both on the same region), the resulting equation is of mixed type. We emphasise, that transmission conditions are not necessary for the formulation of the equations but are rather a consequence of $U$ being a solution to the equation; see also \cite[Remark 3.2]{W16_SH}.
\end{remark}

Next, we aim to study the limit behaviour of $ \mathcal{S}_N$, which is given as $\mathcal{S}$ but with $s_0(N\cdot)$ and $s_1(N\cdot)$ respectively replacing $s_0$ and $s_1$. In particular, our aim is to establish the following theorem. For this, we define
\[
    H_\rho^k(H)\coloneqq  \dom(\partial_t^k)
\]
endowed with the graph norm of $\partial_t^k$ acting as an operator from $L^2_\rho(H)$ into itself. It can be shown that given $\rho>\rho_0$ that $\partial_t$ is continuously invertible in $L^2_\rho(H)$; so that $u\mapsto \|\partial_t^ku\|$ is equivalent to the graph norm on $H^k_\rho(H)$. 
\begin{theorem}\label{thm:qht} Let $\rho>\rho_0$. There exists $\kappa\geq 0$ such that for all $N\in \mathbb{N}$ and $f\in H_\rho^2(L^2(\Omega))$ we have
\begin{multline*}
   \left\| \left(\partial_t \begin{pmatrix} s_0 (N\cdot) & 0 \\ 0 & 1\end{pmatrix}+\begin{pmatrix} s_1 (N\cdot) & 0 \\ 0 & 0 \end{pmatrix} + \begin{pmatrix} 0 & \dive_\# \\ \grad_\# & 0\end{pmatrix}\right)^{-1}\begin{pmatrix}f \\ 0\end{pmatrix} \right. \\- \left.
   \left(\partial_t \begin{pmatrix} \langle s_0\rangle & 0 \\ 0 & 1\end{pmatrix}+\begin{pmatrix} \langle s_1\rangle & 0 \\ 0 & 0 \end{pmatrix} + \begin{pmatrix} 0 & \dive_\# \\ \grad_\# & 0\end{pmatrix}\right)^{-1}\begin{pmatrix}f \\ 0\end{pmatrix} \right\|_{L_\rho^2(L^2(\Omega)^3)} \\
   \leq \frac{\kappa}{N} \| \partial_t^2 f\|_{L_\rho^2(L^2(\Omega))},
\end{multline*}where
\[
     \langle s_j\rangle\coloneqq \int_\Omega s_j(x) dx \quad(j\in\{0,1\}).
\]
\end{theorem}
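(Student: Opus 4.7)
The plan is to turn the time-dependent estimate into a uniform family of stationary homogenisation estimates via the exponentially weighted Fourier--Laplace transform $\mathcal{F}_\rho$ and then to import the stationary bound from \cite{CW17}. Recall that $\mathcal{F}_\rho$ is a unitary isomorphism from $L^2_\rho(H)$ onto $L^2(\mathbb{R};H)$ conjugating $\partial_t$ to multiplication by $z\coloneqq it+\rho$. Since the coefficients and the skew-self-adjoint block operator $\bigl(\begin{smallmatrix}0&\divep\\\gradp&0\end{smallmatrix}\bigr)$ are time-independent, $\mathcal{F}_\rho$ diagonalises the two closed operators whose inverses appear in the statement into the pointwise-in-$z$ families
\[
\mathcal{L}_N(z)\coloneqq z\begin{pmatrix}s_0(N\cdot)&0\\0&1\end{pmatrix}+\begin{pmatrix}s_1(N\cdot)&0\\0&0\end{pmatrix}+\begin{pmatrix}0&\divep\\\gradp&0\end{pmatrix},
\]
and $\mathcal{L}(z)$ defined analogously with $s_j$ replaced by $\langle s_j\rangle$. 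By Theorem \ref{thm:st}, each is boundedly invertible on $L^2(\Omega)^3$, and by Plancherel the left-hand side of the theorem equals the $L^2(\mathbb{R};L^2(\Omega)^3)$-norm of the function $t\mapsto \bigl(\mathcal{L}_N(it+\rho)^{-1}-\mathcal{L}(it+\rho)^{-1}\bigr)(\hat f(it+\rho),0)^\top$, where $\hat f=\mathcal{F}_\rho f$.

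The next step is to invoke the quantitative stationary estimate provided by \cite[Theorem 3.9]{CW17} together with \cite[Proposition 3.16]{CW17}. In the present periodic unit-cell setting, where only the zero-order block oscillates, these results should yield
\[
\bigl\|\bigl(\mathcal{L}_N(z)^{-1}-\mathcal{L}(z)^{-1}\bigr)(g,0)^\top\bigr\|_{L^2(\Omega)^3}\leq \frac{C\,(1+|z|)^2}{N}\,\|g\|_{L^2(\Omega)},
\]
uniformly for $z$ with $\Re z=\rho$, with $C$ depending only on $\rho$, $c$, $\|s_0\|_\infty$ and $\|s_1\|_\infty$. The appearance of $(1+|z|)^2$ is precisely what forces the regularity $\partial_t^2 f$ on the right-hand side: since $\partial_t$ is boundedly invertible in $L^2_\rho$ for $\rho>\rho_0$ and $\widehat{\partial_t^2 f}(z)=z^2\hat f(z)$, the function $z\mapsto(1+|z|)^2\|\hat f(z)\|_{L^2(\Omega)}$ is square-integrable on the line $\Re z=\rho$ with norm controlled by $\|\partial_t^2 f\|_{L^2_\rho(L^2(\Omega))}$. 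Combining the two ingredients, squaring, integrating in $t$, and applying Plancherel in reverse yields the claim with $\kappa$ absorbing $C$ and $\rho^{-2}$.

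The main obstacle is securing the stationary estimate in the second step, with the required $z$-dependence. Even though only zero-order coefficients oscillate, so that the homogenised coefficient is merely the arithmetic mean $\langle s_j\rangle$, upgrading the weak-$\ast$ convergence $s_j(N\cdot)\rightharpoonup\langle s_j\rangle$ to an operator-norm convergence rate of order $1/N$ requires a Helmholtz-type decomposition of $L^2(\Omega)^3$ along $\ker\bigl(\begin{smallmatrix}0&\divep\\\gradp&0\end{smallmatrix}\bigr)$ and its orthogonal complement together with a commutator estimate of the form $\|[s_j(N\cdot),P]\|=O(1/N)$ on the relevant subspaces, where $P$ is the associated orthogonal projection; this is where the two-dimensional periodicity is genuinely used. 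Keeping the $|z|$-factors through the resolvent identity to at most quadratic growth, so that only two time derivatives on $f$ suffice, is the delicate bookkeeping that the quoted results in \cite{CW17} are designed to deliver; once they are in place, the Fourier--Plancherel reduction above closes the argument.
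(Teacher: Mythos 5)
Your first step---the Fourier--Laplace reduction of the theorem to a family of stationary estimates $\bigl\|\bigl(\mathcal{L}_N(z)^{-1}-\mathcal{L}(z)^{-1}\bigr)(f,0)^\top\bigr\|\leq \kappa N^{-1}\|z^2f\|$, uniformly in $z$ with $\Re z\geq\rho$---is exactly how the paper proceeds, and your observation that the quadratic growth in $|z|$ is what dictates the $H^2_\rho$ regularity of $f$ is correct. The gap is in how you obtain the stationary estimate. The results of \cite{CW17} that you (and the paper) invoke are \emph{fibre-wise} statements: they concern operators on the unit cell with quasi-periodic boundary conditions indexed by a quasimomentum $\theta$ and carrying a factor $\tfrac1N$ in front of the first-order differential block; they do not apply directly to the operator on $\Omega$ with $N$-oscillating coefficients. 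To bring the problem into that form the paper needs two ingredients you omit entirely: (i) the reduction of Proposition \ref{prop:1storder}, which restricts the second component to $\ran(\gradp)$ via $\iota$ (using Lemma \ref{lem:ranclosed}), and (ii) the discrete Floquet--Bloch/Gelfand transformation $G_N=V_N\mathcal{T}_N$ of Theorem \ref{thm:gt}, which unitarily conjugates $\mathcal{L}_N(z)^{-1}$ into a direct sum over $\theta=2\pi k/N$ of resolvents of $\bigl(\begin{smallmatrix} zs_0+s_1 & 0\\ 0 & z\end{smallmatrix}\bigr)+\tfrac1N\bigl(\begin{smallmatrix}0&\dive_\theta\iota_\theta\\ \iota_\theta^*\grad_\theta&0\end{smallmatrix}\bigr)$ on the unit cell, with the oscillation removed from the coefficients and transferred into the $\tfrac1N$ scaling of the differential operator. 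Only after this conjugation do the estimates from the proof of \cite[Theorem 3.1]{CW17} apply, uniformly in $\theta$, and the claim follows by undoing $G_N$ and then the Fourier--Laplace transformation.

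The substitute mechanism you sketch---a Helmholtz decomposition together with a commutator bound $\|[s_j(N\cdot),P]\|=O(1/N)$, $P$ being the orthogonal projection onto $\ran(\gradp)$---does not hold. Writing $s_j(N\cdot)=\langle s_j\rangle+\tilde s_j(N\cdot)$ with $\tilde s_j$ mean-free and applying the commutator to a fixed gradient field gives $[\tilde s_j(N\cdot),P]\gradp\phi=(1-P)\bigl(\tilde s_j(N\cdot)\gradp\phi\bigr)$, and a Fourier computation on the torus shows that this has $L^2$-norm bounded below independently of $N$ for generic $\tilde s_j$ and $\phi$: the Helmholtz projection acts mode by mode and is scale invariant, so the oscillation of $\tilde s_j(N\cdot)$ is not damped. (The standard Calder\'on-type commutator gain requires a Lipschitz bound on the multiplier, and $\|\nabla(s_j(N\cdot))\|_\infty$ grows like $N$.) Weak-$\ast$ convergence of the coefficients can only be upgraded to an operator-norm rate after composing with the resolvent of the differential operator, and it is precisely the fibre decomposition that makes this quantitative; without it the second step of your argument cannot be closed.
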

In order to prove this theorem, we need to introduce the Fourier--Laplace transformation: Let $H$ be a Hilbert space. For $\phi\in C_c(\mathbb{R};H)$ we define
\[
    \mathcal{L}_\rho \phi(\xi)\coloneqq \frac{1}{\sqrt{2\pi}}\int_{\mathbb{R}} \phi(t)\exp(-it\xi-\rho t)dt.
\]
A variant of Plancherel's theorem yields that $\mathcal{L}_\rho$ extends to a unitary operator from $L_\rho^2(H)$ into $L^2(H)$. A remarkable property of $\mathcal{L}_\rho$ is that
\[
    \partial_t =\mathcal{L}^*_\rho (im+\rho)\mathcal{L}_\rho,
\]
where $m$ is the multiplication by argument operator in $L^2(\mathbb{R};H)$ with maximal domain; see \cite[Corollary 2.5]{KPSTW14_OD}. Thus, applying the Fourier--Laplace transformation to the norms on either side of the inequality in Theorem \ref{thm:qht}, we deduce that it suffices to show that there exists $\kappa\geq0$ such that for all $N\in \mathbb{N}$, $z\in \mathbb{C}_{\Re\geq \rho}$ and $f\in L^2(\Omega)$ we have
\begin{align}\notag
  & \left\| \left(z \begin{pmatrix} s_0 (N\cdot) & 0 \\ 0 & 1\end{pmatrix}+\begin{pmatrix} s_1 (N\cdot) & 0 \\ 0 & 0 \end{pmatrix} + \begin{pmatrix} 0 & \dive_\# \\ \grad_\# & 0\end{pmatrix}\right)^{-1}\begin{pmatrix}f \\ 0\end{pmatrix} \right. \\ &\quad\quad- \left.
   \left(z \begin{pmatrix} \langle s_0\rangle & 0 \\ 0 & 1\end{pmatrix}+\begin{pmatrix} \langle s_1\rangle & 0 \\ 0 & 0 \end{pmatrix} + \begin{pmatrix} 0 & \dive_\# \\ \grad_\# & 0\end{pmatrix}\right)^{-1}\begin{pmatrix}f \\ 0\end{pmatrix} \right\|_{L^2(\Omega)^3} \label{eq:crucial}\\
 &\quad\quad\quad\quad   \leq \frac{\kappa}{N} \| z^2 f\|_{L^2(\Omega)}.\notag
\end{align} 
This inequality will be shown using the results of \cite{CW17}. For this we need some auxiliary statements.
\begin{lemma}\label{lem:ranclosed} The space $\ran(\gradp)$ is closed on $L^2(\Omega)^2$.
\end{lemma}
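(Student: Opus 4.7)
The plan is to establish a Poincar\'e--Wirtinger inequality for periodic functions on $\Omega$ and then invoke the standard equivalence: a closed operator has closed range if and only if it is bounded below on the orthogonal complement of its kernel.

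First I would identify the kernel. Since $\grad_\# = -\dive_\#^* = \tilde{\grad}^{**}$ is the closure of the classical periodic gradient on $C_\#^1(\Omega)$, and since the only $H^1_\#(\Omega)$ functions whose gradient vanishes are constants, we have $\ker(\grad_\#) = \mathbb{C}\cdot\mathbf{1}$, and hence $(\ker \grad_\#)^\perp = \{u \in L^2(\Omega): \int_\Omega u\,dx = 0\}$.

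Next I would establish the Poincar\'e inequality: there exists $C>0$ such that for all $u\in H^1_\#(\Omega)$ with $\int_\Omega u\,dx = 0$,
\[
   \|u\|_{L^2(\Omega)} \leq C\|\grad_\# u\|_{L^2(\Omega)^2}.
\]
This follows straight from the Fourier series representation on the torus: expanding $u = \sum_{k\in\mathbb{Z}^2} \hat u_k e^{2\pi i k\cdot x}$, the mean-zero condition forces $\hat u_0 = 0$, so
\[
   \|u\|^2 = \sum_{k\neq 0}|\hat u_k|^2 \leq \sum_{k\neq 0}|k|^2|\hat u_k|^2 = \frac{1}{4\pi^2}\|\grad_\# u\|^2.
\]

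Finally I would deduce closedness of the range by a direct Cauchy argument. Given a sequence $v_n = \grad_\# u_n \to v$ in $L^2(\Omega)^2$, I may replace each $u_n$ by $u_n - \int_\Omega u_n\,dx$ (which lies in $\dom(\grad_\#)$ and has the same gradient), so that $\int_\Omega u_n\,dx = 0$. The Poincar\'e inequality then gives
\[
   \|u_n - u_m\|_{L^2(\Omega)} \leq C\|\grad_\#(u_n - u_m)\|_{L^2(\Omega)^2} = C\|v_n - v_m\|_{L^2(\Omega)^2} \to 0,
\]
so $u_n \to u$ in $L^2(\Omega)$. Since $\grad_\#$ is closed (being an adjoint), we conclude $u\in\dom(\grad_\#)$ and $\grad_\# u = v$, i.e., $v \in \ran(\grad_\#)$.

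The only nontrivial ingredient is the Poincar\'e--Wirtinger inequality for periodic functions; however, this is entirely classical via Fourier series on the torus, so no real obstacle is expected. The rest is a standard application of the bounded-below criterion for closed range.
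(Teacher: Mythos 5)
Your proof is correct, but it follows a genuinely different route from the paper. The paper argues qualitatively: since $\Omega$ has continuous boundary, $H^1(\Omega)$ embeds compactly into $L^2(\Omega)$; as $\gradp\subseteq\grad$ and $\gradp$ is closed, $H^1_\#(\Omega)$ inherits this compact embedding, and then an abstract standard lemma (compact embedding of the domain with graph norm implies closed range, cited from the Dirichlet-to-Neumann paper in the bibliography) finishes the job. You instead exploit the specific torus structure: you identify $\ker(\gradp)$ as the constants, prove the Poincar\'e--Wirtinger inequality explicitly via Fourier series, and run the bounded-below/Cauchy-sequence argument by hand. Both are sound. Your approach buys an explicit constant ($C=1/(2\pi)$) and avoids invoking Rellich and the external abstract lemma, at the modest cost of needing the Fourier characterisation of $\gradp$ on its domain (i.e., that $\gradp$ acts as multiplication by $2\pi i k$ on Fourier coefficients, which holds because $\gradp=\tilde{\grad}^{**}$ is the closure of the classical periodic gradient and trigonometric polynomials are graph-norm dense). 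The paper's argument is the one that generalises to situations without an explicit spectral decomposition, e.g.\ less symmetric cells or variable-coefficient operators, which is presumably why the authors phrase it that way.
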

\begin{proof}
 Since $\Omega$ has continuous boundary, we get that $H^1(\Omega)$ embeds compactly into $L^2(\Omega)$. Since $\gradp\subseteq \grad$, where $\grad\colon H^1(\Omega)\subseteq L^2(\Omega)\to L^2(\Omega)^2$ is the distributional gradient and $\gradp$ is closed, we obtain that $H^1_\#(\Omega)$ is compactly embedded into $L^2(\Omega)$, as well. It is now standard to show that $\ran(\gradp)\subseteq L^2(\Omega)^2$ is closed, see e.g.~\cite[Lemma 4.1(b)]{EGW17_D2N}.
\end{proof}
Using Lemma \ref{lem:ranclosed}, we define 
\[
\iota\colon \ran(\gradp)\hookrightarrow L^2(\Omega)^2, \phi\mapsto \phi
\]
and obtain that 
\[
   \iota^*\colon L^2(\Omega)^2\to \ran(\gradp)
\]
is the (surjective) orthogonal projection according to the decomposition $L^2(\Omega)^2 = \ker(\divep)\oplus\ran(\gradp)$.
\begin{proposition}[{{\cite[Proposition 3.8]{CW17}}}]\label{prop:1storder} Let $f\in L^2(\Omega)$. Then the following conditions are equivalent:
\begin{enumerate}
\item $u\in \dom(\divep\gradp)$ satisfies
\[
    -\divep\gradp u + z^2s_0 + zs_1 u = f
\]
\item $u\in \dom(\gradp)$ and $q\in \dom(\divep)$ satisfy
\[
  \left(    \begin{pmatrix}
      z s_0 + s_1 & 0 \\ 0 & z 
    \end{pmatrix} + \begin{pmatrix} 0 & \divep \\ \gradp & 0 \end{pmatrix}\right)\begin{pmatrix} u \\ q\end{pmatrix} = \begin{pmatrix} z^{-1}f \\ 0 \end{pmatrix}.
\]
\item $u\in \dom(\gradp)$ and $q\in \dom(\divep)\cap \ran(\gradp)$ satisfy
\[
  \left(    \begin{pmatrix}
      z s_0 + s_1 & 0 \\ 0 & z 
    \end{pmatrix} + \begin{pmatrix} 0 & \divep \iota \\ \iota^* \gradp & 0 \end{pmatrix}\right)\begin{pmatrix} u \\ q\end{pmatrix} = \begin{pmatrix} z^{-1}f \\ 0 \end{pmatrix}.
\]
\end{enumerate}
\end{proposition}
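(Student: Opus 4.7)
The proposition asserts the equivalence of a scalar second-order formulation with two first-order system formulations, so the plan is to verify the implications by direct algebraic manipulation, using that $z \neq 0$ (since $\Re z \geq \rho > \rho_0 \geq 0$) and the decomposition $L^2(\Omega)^2 = \ker(\divep) \oplus \ran(\gradp)$ afforded by Lemma \ref{lem:ranclosed}. The closedness of $\ran(\gradp)$ there is what makes $\iota^*$ a well-defined orthogonal projection onto $\ran(\gradp)$.

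For (i) $\Leftrightarrow$ (ii), I would start from a solution $u$ of (i) and introduce the auxiliary unknown $q \coloneqq -z^{-1}\gradp u$. Since $u \in \dom(\divep\gradp)$, one has $\gradp u \in \dom(\divep)$, hence $q \in \dom(\divep)$. The second row of the system in (ii), namely $\gradp u + zq = 0$, holds by construction, and substituting $\divep q = -z^{-1}\divep\gradp u$ into the first row, then multiplying through by $z$, recovers the equation in (i). For the converse, I would read off from the second row of (ii) that $q = -z^{-1}\gradp u$, from which $\gradp u = -zq \in \dom(\divep)$, so $u \in \dom(\divep\gradp)$, and then substitute into the first row.

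For (ii) $\Leftrightarrow$ (iii), the central observation is that whenever $(u,q)$ solves (ii), the vector $q = -z^{-1}\gradp u$ automatically lies in $\ran(\gradp)$. Consequently $\iota q = q$, so $\divep \iota q = \divep q$, while $\iota^*\gradp u = \gradp u$ since $\gradp u \in \ran(\gradp)$. This turns system (ii) into system (iii). Conversely, given a solution of (iii) with $q \in \dom(\divep) \cap \ran(\gradp)$, the same identities $\iota q = q$ and $\iota^*\gradp u = \gradp u$ apply, so (iii) immediately reduces to (ii).

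I do not expect a genuine obstacle: the result is essentially a restatement of the same equation in three choices of unknowns. The only point requiring care is to notice that the $q$-component of any solution of (ii) lies a priori in $\ran(\gradp)$, so that the projection $\iota^*$ in (iii) acts trivially on the relevant vectors and one can legitimately pass between $(u,q) \in \dom(\gradp) \times \dom(\divep)$ and $(u,q) \in \dom(\gradp) \times (\dom(\divep) \cap \ran(\gradp))$ without losing or gaining solutions.
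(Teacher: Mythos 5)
Your proposal is correct and follows essentially the same route as the paper: the paper delegates the equivalence of 1 and 2 to \cite[Proposition 3.8]{CW17} (with the same substitution $q=-z^{-1}\gradp u$ that you carry out explicitly), and its argument for passing between 2 and 3 is exactly your observation that the second equation forces $q\in\ran(\gradp)$, so that $\iota$ and $\iota^*$ act trivially on the relevant vectors. Your direct verification of 1 $\Leftrightarrow$ 2 is merely a more self-contained rendering of the cited result.
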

\begin{proof}
The equivalence of 1 and 2 follows from \cite[Proposition 3.8]{CW17} by multiplying 1 by $z^{-1}$ and by putting $\varepsilon=1$, $\theta=0$, $n=1$, $s=zs_0+s_1$ and $a=z^{-1}$ in  \cite[Proposition 3.8]{CW17}. The implication from 3 to 1 follows upon realising that $\divep\gradp=\divep\iota\iota^*\gradp$. Thus, it remains to establish that 2 is sufficient for 3. For this implication, however, note that the second equation in 2, implies that $zq\in \ran(\gradp)$ and, hence, $q\in \ran(\gradp)$. Therefore, $zq=\iota\iota^*zq=z\iota\iota^*q$.
\end{proof}

Next, we introduce the Floquet--Bloch or Gelfand transformation:

\begin{definition}
 Let $N\in \mathbb{N}$, $f\colon \mathbb{R}^2\to \mathbb{C}$. Then define
 \begin{multline*}
      \mathcal{V}_N f(\theta,y)\coloneqq \frac{1}{N}\sum_{k\in \{0,\ldots,N-1\}^2} f(y+k)e^{-i\theta\cdot k}\\(y\in [0,1)^2, \theta \in \{2\pi k /N; k\in \{0,\ldots,N-1\}^2\})
 \end{multline*}
 and for $f\in L^2(0,1)$
 \[
     T_N f \coloneqq \frac1N f\left(\frac{\cdot}{N}\right).
 \]
\end{definition}

As in \cite{FrW17} one can show the following result:
\begin{theorem}\label{thm:gt}
(a) The mapping $V_N \colon L^2_\#((0,N)^2)\to L^2(0,1)^{N^2}$ given by
\[
   f\mapsto (\mathcal{V}_N f( 2\pi k/N, \cdot))_{k\in \{0,\ldots,N-1\}^2}
\]
is unitary, where $L^2_\#((0,N)^2)$ denotes the set of $(0,N)^2$-periodic $L^2_{\textnormal{loc}}(\mathbb{R}^2)$ functions endowed with the scalar product from $L^2((0,N)^2)$.

(b) The mapping $G_N\coloneqq V_N\mathcal{T}_N$ is unitary.
\end{theorem}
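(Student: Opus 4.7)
The plan is to handle (a) and (b) separately: for (a), I would verify that $V_N$ is both an isometry and surjective, each by a direct computation; part (b) will then follow by showing that $\mathcal{T}_N$ is itself unitary and that $G_N$ is a composition of unitaries.

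For the isometry in (a), I would expand
\[
  \|V_N f\|^2=\sum_{k\in\{0,\ldots,N-1\}^2}\int_{[0,1)^2}|\V_N f(2\pi k/N, y)|^2\,dy,
\]
substitute the defining formula for $\V_N$, and interchange the sum in $k$ with the double sum indexed by $j,l\in\{0,\ldots,N-1\}^2$ coming from expanding the modulus squared. The decisive ingredient is the two-dimensional character orthogonality
\[
  \sum_{k\in\{0,\ldots,N-1\}^2} e^{-i(2\pi k/N)\cdot m} = N^2\,\mathbf{1}_{m\equiv 0\,\textrm{mod}\,N},
\]
which, since $j-l\in\{-(N-1),\ldots,N-1\}^2$, forces $j=l$ and collapses the expression to $\sum_{j}\int_{[0,1)^2}|f(y+j)|^2\,dy$. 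This equals $\|f\|_{L^2((0,N)^2)}^2$ because the unit translates $[0,1)^2+j$ for $j\in\{0,\ldots,N-1\}^2$ tile the period cell $[0,N)^2$.

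For surjectivity I would exhibit the explicit inverse $W_N\colon L^2(0,1)^{N^2}\to L^2_\#((0,N)^2)$ defined by
\[
  (W_N(g_k)_k)(y+j)\coloneqq \frac{1}{N}\sum_{k\in\{0,\ldots,N-1\}^2} g_k(y)\,e^{i(2\pi k/N)\cdot j}
\]
for $y\in[0,1)^2$ and $j\in\{0,\ldots,N-1\}^2$, extended $(0,N)^2$-periodically. The identities $V_N W_N = \mathrm{id}$ and $W_N V_N = \mathrm{id}$ then follow from the same character-sum computation used for the isometry, now with the roles of spatial index and Bloch parameter interchanged.

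For (b), I would verify that $\mathcal{T}_N\colon L^2((0,1)^2)\to L^2_\#((0,N)^2)$ is unitary in its own right: the substitution $y=x/N$ shows that the Jacobian factor $N^2$ exactly cancels the $1/N^2$ coming from the prefactor $1/N$ squared, so $\mathcal{T}_N$ is an isometry, and its inverse is $g\mapsto N\,g(N\,\cdot)|_{(0,1)^2}$. The composition $G_N=V_N\mathcal{T}_N$ is then unitary as a product of unitaries. The main obstacle is less conceptual than organisational: one has to set up the two-dimensional character-orthogonality identity carefully and confirm that $\{0,\ldots,N-1\}^2$ provides exactly one representative per coset modulo $N$, so that $j-l\equiv 0 \pmod N$ on this index range genuinely forces $j=l$. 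Once this bookkeeping is in place, the statement is just a two-dimensional discrete Fourier transform combined with the scaling isometry $\mathcal{T}_N$.
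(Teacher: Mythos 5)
Your proposal is correct, and it in fact supplies the details that the paper omits: the paper gives no proof of Theorem \ref{thm:gt}, merely remarking that the result follows ``as in'' the one-dimensional treatment of \cite{FrW17}. Your argument --- isometry of $V_N$ via the character-orthogonality identity $\sum_{k}e^{-i(2\pi k/N)\cdot(j-l)}=N^2\mathbf{1}_{j\equiv l}$ together with the tiling of $[0,N)^2$ by the translates $[0,1)^2+j$, surjectivity via the explicit inverse $W_N$, and unitarity of $\mathcal{T}_N$ from the cancellation of the Jacobian $N^2$ against the prefactor $N^{-2}$ --- is exactly the standard discrete Floquet--Bloch/Gelfand computation that the cited reference carries out in one dimension, transposed correctly to the two-dimensional setting.
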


The mapping $G_N$ in the previous theorem is also called the Floquet--Bloch or Gelfand transformation. With this tranformation at hand, we are in the position to transform the inequality in \eqref{eq:crucial} into an equivalent form such that \cite[Section 2]{CW17} is applicable. The reason is the following representation:

\begin{proposition}[{{\cite{CW17} and \cite{FrW17}}}] Let $N\in \mathbb{N}$, $k\in \{0,\ldots, N-1\}^2$ and $\theta\coloneqq 2\pi k/N$, $f\in L^2(\Omega)$. Then we have
\begin{multline*}
\left(G_N\left(    \begin{pmatrix} zs_0(N\cdot) + s_1(N\cdot) & 0 \\ 0 & z \end{pmatrix} + \begin{pmatrix} 0 & \divep\iota \\ \iota^* \gradp & 0\end{pmatrix}\right)^{-1}\begin{pmatrix} f \\ 0 \end{pmatrix} G_N^*\right)_k \\
= \left(\begin{pmatrix} zs_0(\cdot) + s_1(\cdot) & 0 \\ 0 & z \end{pmatrix} + \frac{1}{N}\begin{pmatrix} 0 & \dive_\theta\iota_{\theta} \\ \iota_{\theta}^*\grad_\theta & 0\end{pmatrix}\right)^{-1}\begin{pmatrix} (G_N f G_N^*)_k \\ 0 \end{pmatrix},
\end{multline*}
where $\dive_\theta$ and $\grad_\theta$ as well as $\iota_\theta$ are given as in \cite[Section 3]{CW17}.
\end{proposition}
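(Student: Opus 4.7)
The plan is to apply unitarity of $G_N$ (Theorem~\ref{thm:gt}(b)) to pull the inverse through the conjugation, thereby writing
\[
G_N\bigl(\mathcal{A}_N^{-1}\begin{pmatrix} f \\ 0 \end{pmatrix}\bigr) = \bigl(G_N \mathcal{A}_N G_N^*\bigr)^{-1} G_N\begin{pmatrix} f \\ 0 \end{pmatrix},
\]
where $\mathcal{A}_N$ denotes the operator being inverted on the LHS of the claimed identity. The proposition then reduces to the assertion that the conjugated operator $G_N \mathcal{A}_N G_N^*$ is block-diagonal with respect to the decomposition of $L^2((0,1)^2)^{N^2}$ indexed by $k\in\{0,\ldots,N-1\}^2$, and that its $k$-th block is precisely the operator on the RHS.

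Next I would analyse how each building block of $\mathcal{A}_N$ transforms under $G_N$-conjugation. The multiplication operator by $s_j(N\cdot)$ on $L^2(\Omega)$ becomes, after $\mathcal{T}_N$-conjugation (the stretching $x\mapsto x/N$), multiplication by $s_j$ on $L^2_\#((0,N)^2)$; since $s_j$ is $(0,1)^2$-periodic, further $V_N$-conjugation acts diagonally, producing multiplication by $s_j$ on each of the $N^2$ unit-cell factors. For the operators $\gradp$ and $\divep$, the chain rule yields, after $\mathcal{T}_N$-conjugation, a rescaled gradient/divergence on $L^2_\#((0,N)^2)$, and $V_N$-conjugation introduces the $\theta$-twist at quasi-momenta $\theta = 2\pi k/N$, giving the block-wise expressions $\tfrac{1}{N}\grad_\theta$ and $\tfrac{1}{N}\dive_\theta$ (with the precise scaling and twist arising exactly as in the one-dimensional calculation of \cite{FrW17} and following the framework of \cite[Section~3]{CW17}). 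Finally, since $\ran(\gradp)$ is preserved by $G_N$ and decomposes as $\bigoplus_k \ran(\grad_\theta)$, the inclusion $\iota$ and its adjoint $\iota^*$ act block-diagonally as $\iota_\theta$ and $\iota_\theta^*$ respectively. Assembling these four observations, $G_N \mathcal{A}_N G_N^*$ is block-diagonal with $k$-th block exactly equal to the operator on the RHS, so inverting block-wise delivers the claim.

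The main obstacle is the careful identification of $\grad_\theta$, $\dive_\theta$ and $\iota_\theta$ as the operators of \cite[Section~3]{CW17} together with the bookkeeping of the scaling $\tfrac{1}{N}$ that emerges from the combination of $\mathcal{T}_N$ (the change of variables affecting the derivative via the chain rule) and $V_N$ (which extracts the Bloch twist at $\theta=2\pi k/N$). Once this identification is in place, the remaining manipulations are purely algebraic, and the two-dimensional setting introduces no new analytical difficulty beyond replacing the scalar Fourier index of \cite{FrW17} by a two-component one.
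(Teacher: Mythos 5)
Your argument is correct in outline but follows a genuinely different route from the paper. You diagonalise the first-order block operator directly: unitarity of $G_N$ gives $G_N\mathcal{A}_N^{-1}G_N^*=(G_N\mathcal{A}_NG_N^*)^{-1}$, and you then identify the conjugates of the multiplication operators, of $\gradp$, $\divep$ (picking up the $\tfrac1N$ from the chain rule under $\mathcal{T}_N$ and the $\theta$-twist from $V_N$), and of $\iota$, $\iota^*$. The paper instead first invokes its Proposition on the equivalence of the first-order system with the second-order problem $-\divep\gradp u+z^2s_0(N\cdot)u+zs_1(N\cdot)u=zf$, $zq=-\gradp u$, applies the already-established Gelfand-transform computation of \cite[Section 3]{CW17} to that scalar second-order equation, and only afterwards reassembles the first-order form on the transformed side. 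The trade-off is concentrated in the treatment of $\iota$ and $\iota_\theta$: in the paper's route the relation $zq=-\gradp u$ automatically places $q$ in $\ran(\gradp)$ and its transform in $\ran(\grad_\theta)$, so the insertion of $\iota_\theta^*$ costs one line; in your route the statement that $G_N$ maps $\ran(\gradp)$ unitarily onto the direct sum of the $\ran(\grad_\theta)$ (so that $\iota$, $\iota^*$ conjugate block-wise to $\iota_\theta$, $\iota_\theta^*$, including the matching of the operator domains $\dom(\divep)\cap\ran(\gradp)$) is a standalone claim that carries essentially all the content of the proposition, and you assert it rather than prove it. Since you defer it to the calculations of \cite{FrW17} and \cite[Section 3]{CW17} at the same level of detail at which the paper defers to the argument after \cite[Proposition 3.5]{CW17}, this is an acceptable gap for a proof of this kind, but it is the step you would have to make precise; everything else in your proposal is routine.
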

\begin{proof}
Let $(u,q)\coloneqq \left(    \begin{pmatrix} zs_0(N\cdot) + s_1(N\cdot) & 0 \\ 0 & z \end{pmatrix} + \begin{pmatrix} 0 & \divep\iota \\ \iota^* \gradp & 0\end{pmatrix}\right)^{-1}\begin{pmatrix} f \\ 0 \end{pmatrix}$.  By Pro\-position \ref{prop:1storder}, we have that 
\[
    -\divep\gradp u +z^2s_0(\cdot)u+ z s_1(\cdot)u = zf, \quad zq= -\gradp u.
\]
Then, by the argument just after \cite[Proposition 3.5]{CW17} (use an adapted version of \cite[Proposition 3.5]{CW17}, where the Gelfand transform used there is replaced by the discrete version introduced here), it follows that $u_k\coloneqq (G_N uG_N^*)_k$ satisfies \[-\frac{1}{N^2}\dive_\theta\grad_\theta u_k +z^2s_0(\cdot)u_k+ z s_1(\cdot)u_k = z (G_N f G_N^*)_k\eqqcolon zf_k.\] Applying $G_N$ to $zq=-\gradp u$, we obtain
\[
    z (G_N q)_k = -\grad_\theta (G_N u)_k = - \iota_\theta^*\grad_\theta (G_N u)_k,
\]
which yields the assertion.
\end{proof}

Now, along the lines of \cite[Section 3]{CW17} it is possible to show the following result, which eventually implies Theorem \ref{thm:qht}.

\begin{theorem}[{{\cite[Proof of Theorem 3.1; Eq (14)]{CW17}}}] There exists $\kappa\geq 0$ such that for all $N\in\mathbb{N}$, $k \in \{0,\ldots,N-1\}^2$ with $\theta=2\pi k/N$ and $f\in L^2(\Omega)$ we have
\begin{multline*}
  \left\| \left(  \begin{pmatrix} zs_0(\cdot) + s_1(\cdot) & 0 \\ 0 & z \end{pmatrix} + \frac{1}{N}\begin{pmatrix} 0 & \dive_\theta\iota_{\theta} \\ \iota_{\theta}\grad_\theta & 0\end{pmatrix} \right)^{-1} \right. \\
  -  \left. \left(  \begin{pmatrix} z\langle s_0\rangle  + \langle s_1\rangle & 0 \\ 0 & z \end{pmatrix} + \frac{1}{N}\begin{pmatrix} 0 & \dive_\theta\iota_{\theta} \\ \iota_{\theta}\grad_\theta & 0\end{pmatrix} \right)^{-1} \right\|\leq \frac{\kappa}{N}\|z^2f\|_{L^2(\Omega)}.
\end{multline*}
\end{theorem}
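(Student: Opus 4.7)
The plan is to reduce the fibre-wise resolvent problem to the scalar second-order equation considered in CW17, apply their operator-norm estimate at that level, and then translate back. Fix $\theta = 2\pi k/N$ and write $f_k = (G_N f G_N^*)_k$. By Proposition~\ref{prop:1storder}, applied with the twisted operators $\grad_\theta, \dive_\theta, \iota_\theta$ in place of $\gradp, \divep, \iota$ and with a $\tfrac{1}{N}$ rescaling of the differential block, the first component $u_N$ of the resolvent applied to $(f_k,0)^\top$ satisfies
\[
 -\tfrac{1}{N^2}\dive_\theta\grad_\theta u_N + z^2 s_0 u_N + z s_1 u_N = z f_k,
\]
while the second component is $q_N = -\tfrac{1}{zN}\iota_\theta^*\grad_\theta u_N$; the analogous pair $(u_\text{hom}, q_\text{hom})$ arises from the averaged coefficients $\langle s_0\rangle,\langle s_1\rangle$. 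This puts us exactly in the CW17 framework of Section~3 with the identification $\varepsilon = 1/N$ and symbol $s = zs_0 + s_1$.

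Next, I would invoke the operator-norm estimate established in the proof of CW17 Theorem~3.1 (Eq.~(14) there). This delivers, uniformly in $\theta$ and $N$,
\[
   \|u_N - u_\text{hom}\|_{L^2} + \tfrac{1}{N}\|\grad_\theta(u_N - u_\text{hom})\|_{L^2} \leq \tfrac{C}{N}\|z f_k\|_{L^2},
\]
where $C$ depends on $\|s_0\|_\infty,\|s_1\|_\infty$ and the coercivity constant $c$, and---crucially---scales like $|z|$, because the coefficient norm of the zeroth-order part is $O(|z|^2)$ while the effective coercivity is $O(|z|)$. Tracking this scaling absorbs one extra factor of $|z|$ into the right-hand side, producing a bound of the form $\tfrac{\kappa}{N}\|z^2 f_k\|_{L^2}$.

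Finally, I would translate the scalar estimate back to the first-order system. The first component of the resolvent difference is precisely $u_N - u_\text{hom}$, and the second is $-\tfrac{1}{zN}\iota_\theta^*\grad_\theta(u_N - u_\text{hom})$, so that $|z|\,\|q_N - q_\text{hom}\| \leq \tfrac{1}{N}\|\grad_\theta(u_N - u_\text{hom})\|$. Both components are therefore dominated by $\tfrac{\kappa}{N}\|z^2 f_k\|_{L^2}$, which is the claimed bound on the given fibre; uniformity in $k$ is then all that is needed to complete the argument (and is used in the ambient Theorem~\ref{thm:qht} via the unitarity of $G_N$ from Theorem~\ref{thm:gt}). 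The main obstacle is the $z$-dependence book-keeping through CW17's constants, since their estimates are phrased for fixed coefficients; uniformity in $\theta$ is automatic because their argument treats general $\theta \in [0,2\pi)^2$ and depends only on $|\e^{i\theta}-1|$-type quantities that are harmless after the rescaling.
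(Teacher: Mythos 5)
Your proposal takes essentially the same route as the paper, which offers no self-contained proof of this theorem but defers it entirely to \cite[Proof of Theorem 3.1, Eq.~(14)]{CW17} after the fibre-wise reduction already set up in the preceding propositions; your reduction to the scalar second-order equation with $\varepsilon=1/N$ and symbol $s=zs_0+s_1$, the invocation of the CW17 operator-norm estimate, and the translation back through $q=-\tfrac{1}{zN}\iota_\theta^*\grad_\theta u$ is exactly the intended argument. The one point the paper leaves equally implicit is the uniform-in-$z$ tracking of the constants (so that the bound closes with $\|z^2f\|$ rather than a worse power), which you correctly flag as the main obstacle rather than resolve in detail.
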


With this theorem, the assertion of Theorem \ref{thm:qht} follows upon applying the inverse Gelfand transformation first and afterwards the inverse Fourier--Laplace transformation; see also \cite[Proof of Theorem 3.10]{FrW17} for the precise argument.

\section{Numerical method}\label{sec:numerics}
In this whole section, we address solving the equation
\begin{equation}\label{eq:problemnum}
  \left(   \partial_t M_0 + M_1 + \begin{pmatrix} 0 &\divep \\ \gradp & 0 \end{pmatrix}\right)U = F
\end{equation}
The analytical results (Theorem \ref{thm:st} and Remark \ref{rem:st}) state that given $F\in H_\rho^1(H)$, the solution $U$ of \eqref{eq:problemnum} yields 
$U\in L^2_\rho(H^1_\#(\Omega)\times H(\dive_\#,\Omega))$ such that $M_0U\in H^1_\rho(H)$ with
$H=L^2(\Omega)^3$. We will use a discontinuous Galerkin method in time and
a conforming Galerkin method in space. For that let $0=t_0<t_1<\dots<t_M=T$ be a mesh for the 
time interval $[0,T]$ using $M$ equidistant intervals $I_m=(t_{m-1},t_{m})$ of length 
$\tau=t_{m}-t_{m-1}=\frac{T}{M}$,\,$m\in\{1,\dots,M\}$. The method could also be defined on a non-uniform
mesh in time with the obvious changes. For the discretisation of $\bar\Omega=[0,1]^2$ we use an 
equidistant tensor-product mesh with mesh-cells $K_{ij}=(x_{i-1},x_i)\times(y_{j-1},y_j)$, where
$x_i=\frac{1}{N}$, $i\in\{0,\dots,N\}$ and $y_j=\frac{1}{N}$, $j\in\{0,\dots,N\}$. Again a non-equidistant
tensor product mesh with different mesh-sizes in the different dimensions is also possible.

We will approximate $U=(u,v)$ using piecewise polynomials, globally discontinuous in time and 
piecewise polynomials, globally continuous ($H^1$-conforming) in space for $u$ and globally $H(\dive)$-conforming for $v$.
Thus our discrete space is given by
\[
  \U^{h,\tau}
  \coloneqq \big\{
        U\in H_\rho([0,T];H):\,
        U|_{I_m},\!\in\PS_q(I_m,\V_u(\Omega)\times \V_v(\Omega)),
        m\!\in\!\{1,\dots,M\}
      \big\},
\]
where the spatial spaces are
\begin{align*}
  \V_u(\Omega)
     & \coloneqq \left\{u\in H^1_{\#}(\Omega):\,u|_{K_{ij}}\in\QS_p(K_{ij}),\,0\leq i,j\leq N\right\},\\
  \V_v(\Omega)
     & \coloneqq \left\{v\in H_{\#}(\dive,\Omega):\,v|_{K_{ij}}\in \RT_{p-1}(K_{ij}),\,0\leq i,j\leq N\right\}.    
\end{align*}
Here, $\PS_q(I_m,H)$ is the space of polynomials of degree up to $q$ on the interval $I_m$ with values in $H$
and $\QS_p(K_{ij})$ is the space of polynomials with total degree up to $p$ on the cell $K_{ij}\subseteq\Omega$. 
Furthermore, $\RT_{p-1}(K_{ij})$ is the Raviart--Thomas space on $K_{ij}$, defined by
\[
  \RT_{p-1}(K_{ij})=(\QS_{p-1}(K_{ij}))^n+\vx\QS_{p-1}(K_{ij}).
\]
Note that
\begin{align*}
  (\QS_{p-1}(K_{ij}))^n\subset \RT_{p-1}(K_{ij})&\subset(\QS_p(K_{ij}))^n,\\
  \dive(\RT_{p-1}(K_{ij}))&\subset \QS_{p-1}(K_{ij})
  \quad\text{and}\quad \\
  \RT_{p-1}(K_{ij})\cdot\vn|_{\partial K_{ij}}&\subset \PS_{p-1}(\partial K_{ij}).
\end{align*}
Finally, the ``$\#$'' denotes periodic boundary conditions. This means, that $w\in \V_u(\Omega)$ fulfils
\[
  w(0,\zeta)=w(1,\zeta),\,w(\zeta,0)=w(\zeta,1),\quad\text{for any }\zeta\in[0,1]
\]
and $w\in \V_v(\Omega)$ fulfils using the outer normal $\vn$ on $\partial\Omega$
\[
  (\vn\cdot w)(0,\zeta)=-(\vn\cdot w)(1,\zeta),\,(\vn\cdot w)(\zeta,0)=-(\vn\cdot w)(\zeta,1),\quad\text{for any }\zeta\in[0,1].
\]

With these notions at hand, we can now properly specify the numerical method.
For any given right-hand side $F\in \U^{h,\tau}$ and initial condition $x_0\in H$, 
find $\U\in\U^{h,\tau}$, such that for all $\Phi\in \U^{h,\tau}$ and $m\in\{1,2,\dots,M\}$ 
it holds
\begin{equation}\label{eq:discr_quad_form}
  \Qmr{(\partial_t M_0+M_1+A)\U,\Phi}
    +\scp{M_0 \jump{\U}_{m-1}^{x_0},{\Phi}^+_{m-1}}
    =\Qmr{ F,\Phi }.
\end{equation}
      
Here, we denote by 
\[
  \jump{\U}_{m-1}^{x_0}:=
  \begin{cases}
    \U(t_{m-1}+)-U(t_{m-1}-),& m\in\{2,\ldots,M\}\\ 
    \U(t_0+)-x_0,& m=1,
  \end{cases} 
\]
the jump at $t_{m-1}$, by $\Phi^+_{m-1}:= \Phi(t_{m-1}+)$ the right-sided trace and by 
\[
  \Qmr{a,b}:=\frac{\tau_m}{2} \sum_{i=0}^q {\omega}^m_i \scp{a(\tmi),b(\tmi)}
\]
a right-sided weighted Gau\ss--Radau quadrature formula on $I_m$ approximating 
\[
  \scprm{a,b} :=
    \intop_{t_{m-1}}^{t_m} 
    \scp{a(t),b(t)} \exp(-2\rho(t-t_{m-1})) \mathrm{d} t,
\]
see \cite{FrTW16} for further details. 

We can cite the convergence results from \cite{FrTW16} which were for Dirichlet boundary conditions.
The proof needs only marginal modifications to hold for the periodic case too. We introduce two
measures for the error. The first one measures the error in an $L^\infty$-$L^2$ sense with
\[
  E^2_{\sup}(a):=\sup_{t\in[0,T]}\langle M_0 a(t),a(t)\rangle,
\]
while the second is a discrete version of the $L^2_\rho(H)$-norm, given by
\[
  E^2_Q(a):=\e^{2\rho T}\sum_{m=1}^M\Qmr{a,a}\e^{-2\rho t_{m-1}}.
\]
Note that $E_Q(a)=\|a\|$ for $a\in\U^{h,\tau}$.

\begin{theorem}\label{theorem:conv_numer}
  We assume for the solution $U$ of Example \eqref{eq:problemnum} the 
  regularity     
  \[
    U\in H_\rho^{1}(H_{\#}^p(\Omega)\times H_{\#}^p(\Omega)^2)\cap 
         H_\rho^{q+3}(L^2(\Omega)\times L^2(\Omega)^2) 
  \]
  as well as 
  \[
    AU\in L^2_\rho( H_{\#}^p(\Omega)\times H_{\#}^p(\Omega)^2).
  \]
  Then we have for the error of the numerical solution $U^{h,\tau}$ of 
  \eqref{eq:discr_quad_form} with a generic constant $C$
  \[
    E^2_{\sup}(U-U^{h,\tau})+
    E^2_Q(U-U^{h,\tau})
    \leq C \e^{2\rho T}(\tau^{2(q+1)} + T h^{2p}).
  \]
\end{theorem}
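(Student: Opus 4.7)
The plan is to follow the analysis of \cite{FrTW16} adapted to the periodic setting, splitting the error through a suitable projection onto $\U^{h,\tau}$ and combining a discrete energy estimate with approximation bounds.

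First, I would introduce a tensor-product projection $\Pi^{h,\tau}=\Pi^\tau\otimes(\Pi_u,\Pi_v)$, where $\Pi^\tau$ is the right-sided Gauss--Radau interpolant of degree $q$ on each time slab $I_m$, $\Pi_u$ is a Lagrange/quasi-interpolant into $\V_u(\Omega)$, and $\Pi_v$ is the Raviart--Thomas interpolant into $\V_v(\Omega)$. The key structural ingredient is the commuting diagram $\divep\Pi_v=P_{p-1}\divep$, where $P_{p-1}$ is the $L^2$-projection onto the elementwise $\QS_{p-1}$-space. Under the assumed regularity of $U$ and of $AU$, standard Gauss--Radau interpolation estimates in time together with standard finite element estimates in space yield
\begin{equation*}
  E_{\sup}^2(U-\Pi^{h,\tau}U)+E_Q^2(U-\Pi^{h,\tau}U)\leq C\,\e^{2\rho T}\bigl(\tau^{2(q+1)}+T h^{2p}\bigr).
\end{equation*}
The extra $q+3$ time derivatives are needed to absorb the Gauss--Radau quadrature remainder and the boundary term in the energy identity below.

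Writing $U-U^{h,\tau}=\eta+\xi$ with $\eta=U-\Pi^{h,\tau}U$ and $\xi=\Pi^{h,\tau}U-U^{h,\tau}\in\U^{h,\tau}$, I would insert $\Phi=\xi$ into \eqref{eq:discr_quad_form}, subtract the analogue satisfied by $U$ modulo the Gauss--Radau quadrature remainder, and exploit the positivity $\rho_0 M_0+\Re M_1\geq c$ together with the skew-selfadjointness of the operator $A$ from \eqref{eq:problemnum}. The latter is a point where periodicity matters: on $\V_u(\Omega)\times\V_v(\Omega)$ one has $\scp{A\Phi,\Phi}+\scp{\Phi,A\Phi}=0$ precisely because boundary contributions from integration by parts cancel pairwise across opposite faces of $\partial\Omega$, thanks to the periodicity conditions built into $\V_u$ and $\V_v$. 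Combined with the jump term in \eqref{eq:discr_quad_form}, which delivers energy telescoping between time slabs, and a discrete Gronwall argument on the $\rho$-weighted sum in $m$, this bounds $E_{\sup}^2(\xi)+E_Q^2(\xi)$ by $C\e^{2\rho T}$ times $E_Q^2(\eta)$ plus the quadrature remainder, which is itself of order $\tau^{2(q+1)}$ under the stated regularity. The theorem then follows from the triangle inequality.

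The main obstacle I anticipate is not the energy step itself but the verification, in the periodic setting, of (i) the commuting diagram $\divep\Pi_v=P_{p-1}\divep$ on $\V_v(\Omega)$ and (ii) the discrete skew-selfadjointness of $A$. Both rest on the normal-trace matching enforced by $H_\#(\dive,\Omega)$: the outer normal vectors on opposite faces of $\partial\Omega$ have opposite orientation, which is exactly what the condition $(\vn\cdot w)(0,\zeta)=-(\vn\cdot w)(1,\zeta)$ encodes, so the Raviart--Thomas degrees of freedom pair up naturally across the periodic boundary. Once these two properties are recorded, the remainder of the argument transfers essentially verbatim from the Dirichlet analysis in \cite{FrTW16}, justifying the authors' remark that the required modifications are marginal.
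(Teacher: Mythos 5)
Your proposal matches the paper's approach: the authors do not prove the theorem themselves but cite the Dirichlet-case analysis of \cite{FrTW16} and assert that ``the proof needs only marginal modifications to hold for the periodic case too,'' and your outline is precisely that analysis (projection-based error splitting, discrete energy identity tested with $\xi$, Gau\ss--Radau quadrature remainder) together with a correct identification of the two periodic modifications --- the commuting-diagram property of the Raviart--Thomas interpolant and the discrete skew-selfadjointness of $A$ via normal-trace matching across opposite faces. Your write-up in fact supplies more detail than the paper does, and I see no gap in it.
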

Note that the spatial regularity is only needed in each cell $K_{ij}$ of the spatial mesh
as local interpolation error estimates are used.

\section{Numerical study}\label{sec:simulations}
All computations were done in $\mathbb{SOFE}$ (\url{https://github.com/SOFE-Developers/SOFE}), 
a finite element suite for Matlab/Octave.

For our numerical study let us assume an equidistant rectangular background mesh 
covering $\Omega$ with nodes $(x_i=\frac{i}{N},y_j=\frac{j}{N})$,\,$i,j\in\{0,\dots,N\}$ for an even number 
$N\in\N$. This background mesh will be used in defining the oscillating coefficients.

Our rough coefficient problem is given by
\begin{gather}\label{eq:prob1}
  \left( 
    \partial_t
    \begin{pmatrix}
      \epsilon_N & 0\\
            0 & 1
    \end{pmatrix}
    +\begin{pmatrix}
      1-\epsilon_N & 0\\
              0 & 0
    \end{pmatrix}
    +\begin{pmatrix}
      0 & \divep \\
      \gradp & 0
    \end{pmatrix}
  \right)U_N
  =\begin{pmatrix}
      f\\
      0
    \end{pmatrix},
\end{gather}
where the coefficient function $\epsilon_N$ is defined as
\[
  \epsilon_N(x,y)\coloneqq \begin{cases}
                      1,& \exists i,j\in\N_0:(x,y)\in(x_{i},x_{i+1})\times(y_{j},y_{j+1})\text{ and }i+j\text{ is even},\\
                      0,& \text{otherwise.}
                    \end{cases}.
\]
The corresponding homogenised problem is then 
\begin{gather}\label{eq:prob2}
  \left( 
    \partial_t
    \begin{pmatrix}
      \frac{1}{2} & 0\\
            0 & 1
    \end{pmatrix}
    +\begin{pmatrix}
      \frac{1}{2} & 0\\
              0 & 0
    \end{pmatrix}
    +\begin{pmatrix}
      0 & \divep \\
      \gradp & 0
    \end{pmatrix}
  \right)U_{\textnormal{hom}}
  =\begin{pmatrix}
      f\\
      0
    \end{pmatrix}.
\end{gather}
The 
theoretical results of Sections~\ref{sec:analysis} and \ref{sec:numerics} provide the following expected convergence 
behaviour
\begin{align*}
  \|U_N-U_{\textnormal{hom}}\|_{H^1_\rho(\R,H)}&\leq C N^{-1},\\
  E_Q(U_{\textnormal{hom}}^{h,\tau}-U_{\textnormal{hom}})&\leq C (h^{p}+\tau^{q+1}),\quad 
  E_Q(U_N^{h,\tau}-U_N)\leq C (h^{p}+\tau^{q+1})
\end{align*}
for smooth solutions $U_{\textnormal{hom}}$ and $U_{N}$. In general we cannot expect the solutions to be very smooth.
Thus, for our experiments we only chose a polynomial order $p=2$ in space and $q=1$ in time. 
Setting furthermore $h=\tau=1/(2N)$ we combine the above expected estimates 
and obtain
\begin{align*}
  E_Q(U_N^{h,\tau}-U_{\textnormal{hom}})
    &\leq E_Q(U_N^{h,\tau}-U_N)+E_Q(U_N-U_{\textnormal{hom}})\\
    &\leq E_Q(U_N^{h,\tau}-U_N)+C\|U_N-U\|_{H^1_\rho(\R,H)}
    \leq C N^{-1},
\end{align*}
where the second inequality comes from Sobolev's embedding theorem (see e.g.~\cite[Lemma 5.2]{KPSTW14_OD})

Let us finalise the definition of our problem by setting the right-hand side
\[
  f(t,x,y)=\begin{cases}
            1, & t\in(0,1)\text{ and }\max\{|2x-1|,|2y-1|\}\leq \frac{1}{4},\\
            0,& \text{otherwise.}
           \end{cases}\]
Thus $f$ is one in the time-space cube $(0,1)\times[1/4,3/4]^2$ and otherwise zero.
Figure~\ref{fig:sol}
\begin{figure}[bt]
\includegraphics[width=0.24\textwidth]{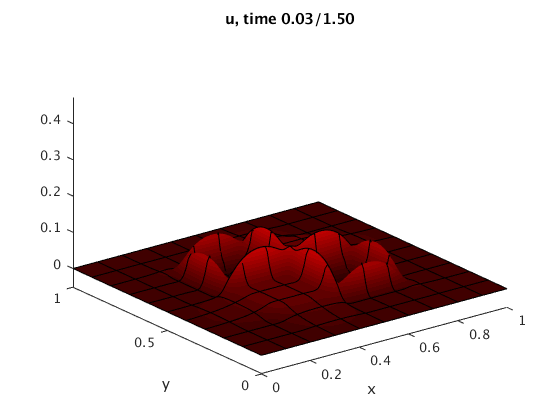}
\includegraphics[width=0.24\textwidth]{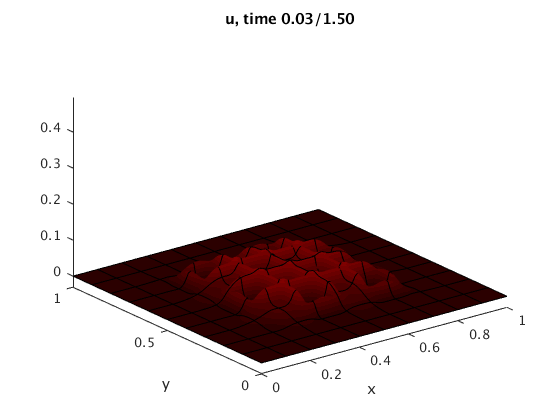}
\includegraphics[width=0.24\textwidth]{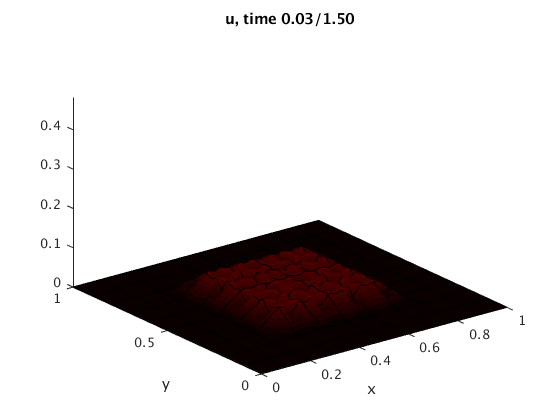}
\includegraphics[width=0.24\textwidth]{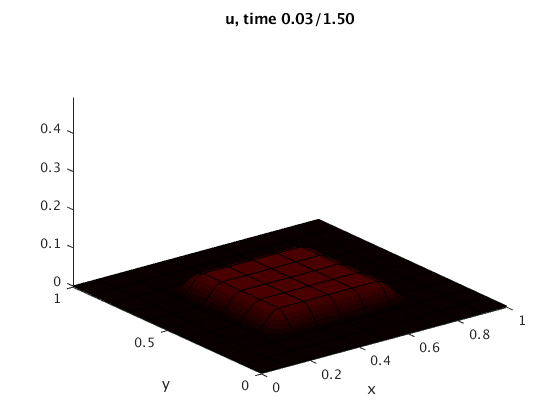}\\
\includegraphics[width=0.24\textwidth]{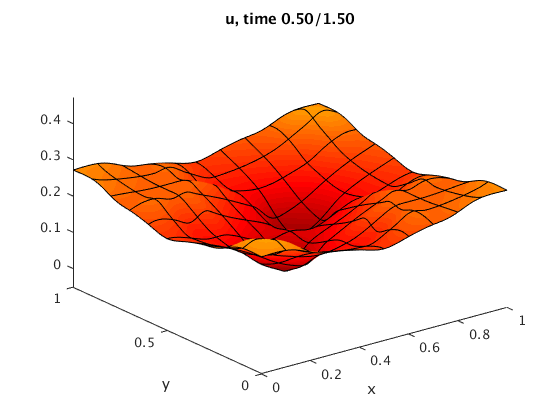}
\includegraphics[width=0.24\textwidth]{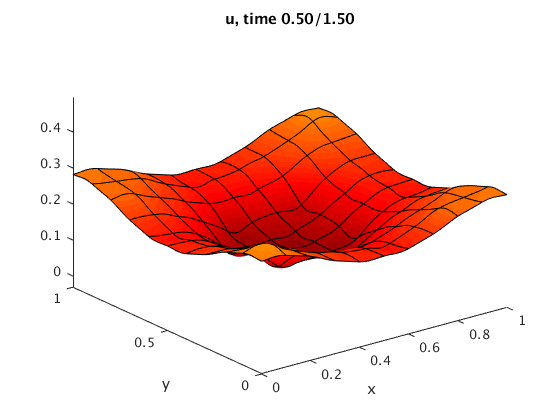}
\includegraphics[width=0.24\textwidth]{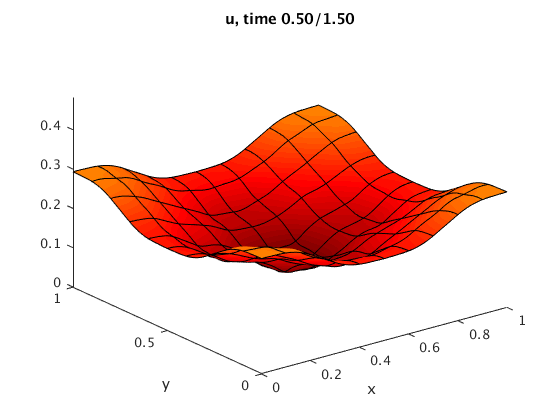}
\includegraphics[width=0.24\textwidth]{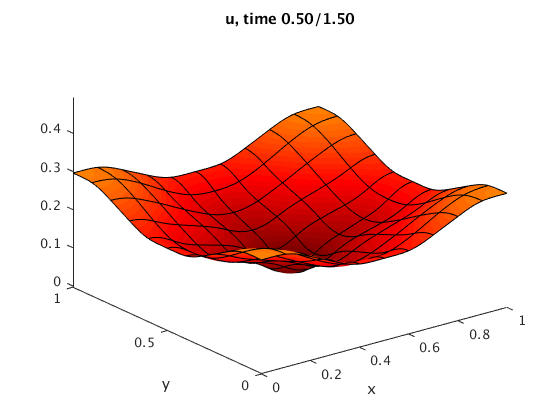}\\
\includegraphics[width=0.24\textwidth]{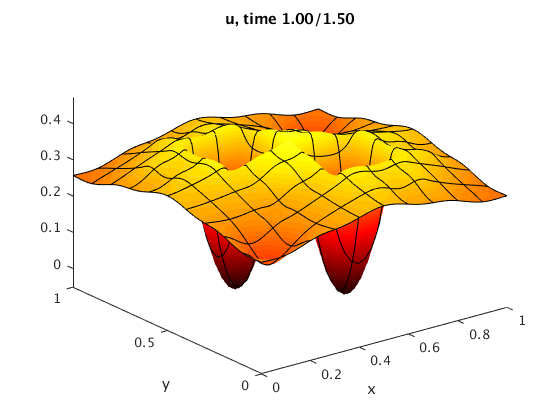}
\includegraphics[width=0.24\textwidth]{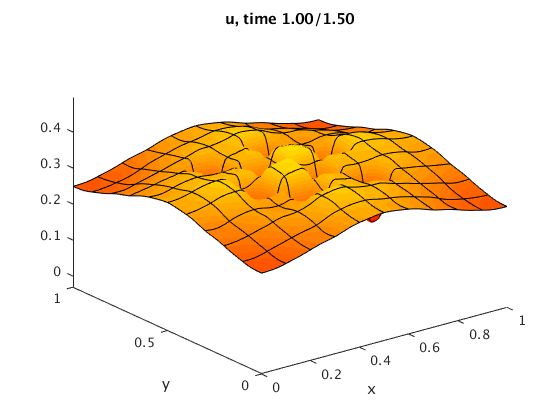}
\includegraphics[width=0.24\textwidth]{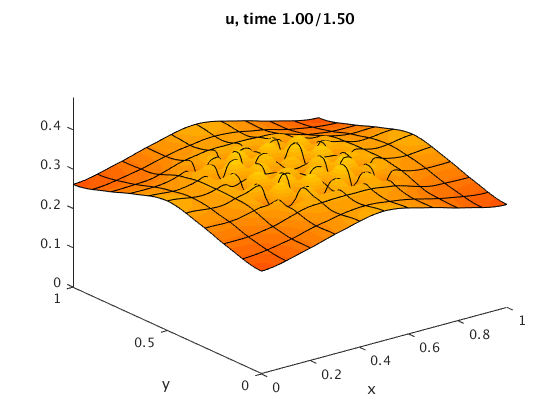}
\includegraphics[width=0.24\textwidth]{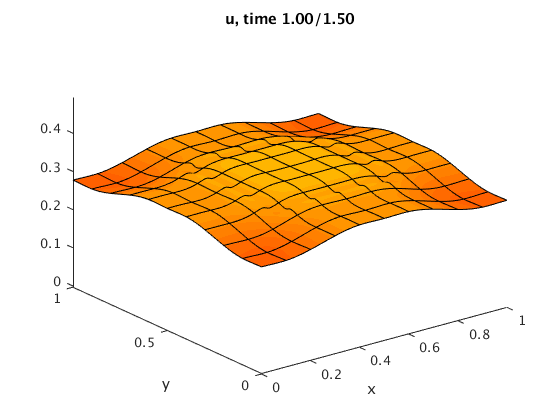}\\
\includegraphics[width=0.24\textwidth]{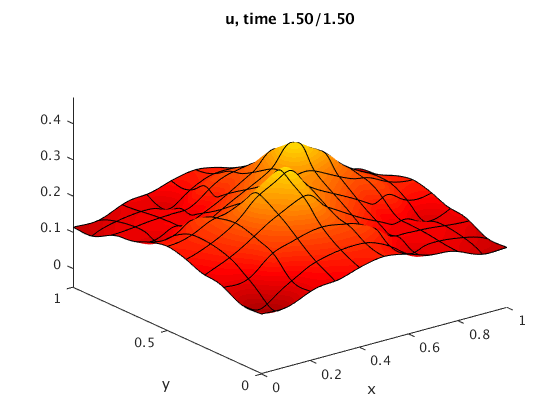}
\includegraphics[width=0.24\textwidth]{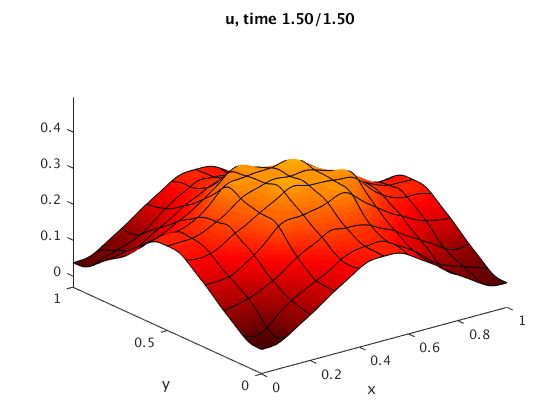}
\includegraphics[width=0.24\textwidth]{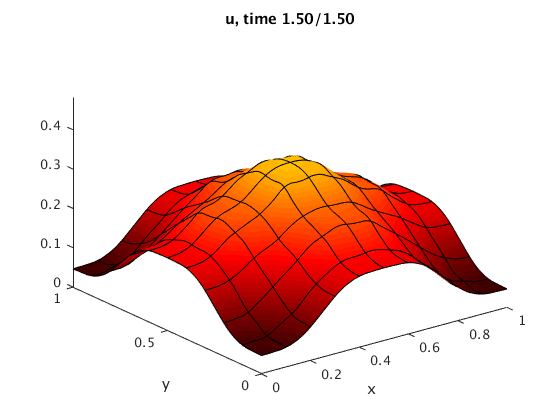}
\includegraphics[width=0.24\textwidth]{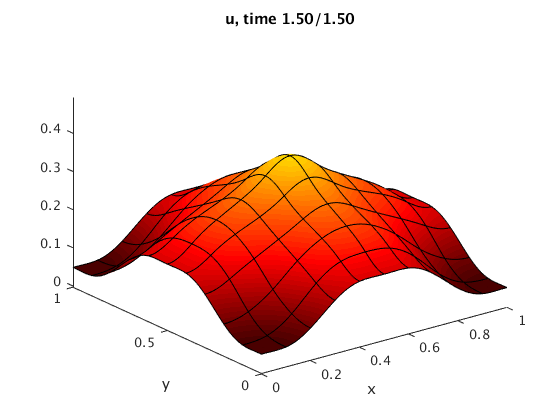}
\caption{Solutions $U_4$, $U_8$, $U_{16}$ and $U_{\textnormal{hom}}$ (left to right) 
         at $t=0.025,\,0.5,\,1$ and $1.5$
         (top to bottom)}
\label{fig:sol}
\end{figure}
shows (numerical approximations of) the solutions $U_4$, $U_8$, $U_{16}$ and $U_{\textnormal{hom}}$ at different times.
In the first row the rough coefficients can be seen quite nicely, while the solution 
becomes smooth very quickly (lower rows). Furthermore, already for a very coarse background mesh 
of $N=16$ the solutions $U_N$ and $U_{\textnormal{hom}}$ are very similar. This visualises the 
homogenisation process.

In Table~\ref{tab:prob1}
    \begin{table}[tb]
       \caption{Convergence results for $\tilde U_N-U_N^{h,\tau}$ and $\tilde U_{\textnormal{hom}}-U_N^{h,\tau}$ of problem \eqref{eq:prob1} using $h=\tau=\frac{1}{2N}$
                \label{tab:prob1}}
       \begin{center}   
        \begin{tabular}{      r@{\qquad}
                        l@{~~}l@{\qquad}
                        l@{~~}l@{\qquad}
                        l@{~~}l@{\qquad}
                        l@{~~}l}
        \toprule
           $N$ & 
           \multicolumn{2}{c}{$E_{\sup}(\tilde U_N-U_N^{h,\tau})$    } &
           \multicolumn{2}{c}{$E_Q(\tilde U_N-U_N^{h,\tau})$         } & 
           \multicolumn{2}{c}{$E_{\sup}(\tilde U_{\textnormal{hom}}-U_N^{h,\tau})$} &
           \multicolumn{2}{c}{$E_Q(\tilde U_{\textnormal{hom}}-U_N^{h,\tau})$     }\\
        \midrule
            2 & 5.046e-02 &      & 1.336e-02 &      & 7.175e-02 &      & 2.778e-02 &     \\
            4 & 2.346e-02 & 1.11 & 6.692e-03 & 1.00 & 4.391e-02 & 0.71 & 1.969e-02 & 0.50\\
            8 & 1.171e-02 & 1.00 & 3.165e-03 & 1.08 & 2.256e-02 & 0.96 & 8.802e-03 & 1.16\\
           16 & 6.063e-03 & 0.95 & 1.507e-03 & 1.07 & 1.038e-02 & 1.12 & 4.186e-03 & 1.07\\
           32 & 3.172e-03 & 0.93 & 6.633e-04 & 1.18 & 5.081e-03 & 1.03 & 2.005e-03 & 1.06\\
           64 & 1.590e-03 & 1.00 & 3.012e-04 & 1.14 & 2.383e-03 & 1.09 & 9.445e-04 & 1.09\\ 
        \bottomrule
        \end{tabular}
       \end{center}
    \end{table}
we see the results for a simulation using polynomial degrees $p=q+1=2$. As no exact solutions to \eqref{eq:prob1} and \eqref{eq:prob2}
are known, we use reference solutions $\tilde U_{N}$ and $\tilde U_{\textnormal{hom}}$ computed with polynomial degree $p=3$ on a mesh with 
256 cells in each space dimension and 384 cells in time dimension. The reference solution mesh is therefore twice 
as fine as the finest one used in the simulation.

Note that we also provided the experimental
orders of convergence (eoc), calculated for errors $E_n$ and $E_{2n}$ by
\[
  \textnormal{eoc}_n=\frac{\ln\frac{E_n}{E_{2n}}}{\ln 2}.
\]
We observe a first order convergence 
of the numerical solution $U_N^{h,\tau}$ towards $U_N$ and towards $U_{\textnormal{hom}}$. While the second result
confirms the reasoning at the beginning of this section, the first directs to a non-smoothness of the solution
as otherwise we would obtain a second order convergence, see Theorem~\ref{theorem:conv_numer}. 
Considering the oscillating coefficients and discontinuous $f$ this reduction is to be expected.

%

\bibliographystyle{plain}

\end{document}